\newtheorem{corollary}{Corollary}[section]
\newtheorem{lemma}[corollary]{Lemma}
\newtheorem{theorem}[corollary]{Theorem}
\newcommand{\mylabel}[1]{\label{#1}
            \ifx\undefined\stillediting
            \else \fbox{$#1$}\fi }
\newcommand{\BE}{\begin{equation}}
\newcommand{\EEQ}{\end{equation}}
\newcommand{\rfb}[1]{\mbox{\rm
   (\ref{#1})}\ifx\undefined\stillediting\else:\fbox{$#1$}\fi}
\newfont{\Blackboard}{msbm10 scaled 1200}
\newfont{\roma}{cmr10 scaled 1200}
\def\CC{\rm \hbox{C\kern-.56em\raise.4ex
         \hbox{$\scriptscriptstyle |$}\kern+0.5 em }}
\newcommand{\mm}    {{\hbox{\hskip 0.5pt}}}
\newcommand{\bluff} {{\hbox{\raise 15pt \hbox{\mm}}}}
\def\section{\@startsection {section}{1}{\z@}{-3.5ex plus -1ex minus
    -.2ex}{2.3ex plus .2ex}{\large\bf}}
\def\be{\begin{equation}}
\def\ee{\end{equation}}
\begin{document}
\thispagestyle{empty}
\title[Thermoelastic system with delay]{Well-posedness and exponential stability of a thermoelastic system with internal delay}
\author{Smain Moulay Khatir}
\address{UR Analysis and Control of PDEs, UR 13ES64, Department of Mathematics, Faculty of Sciences of Monastir, University of Monastir, Tunisia \\
Laboratory of Analysis and Control of Partial Differential Equations, Djillaly Liabes University, Sidi Bel Abbes, Algeria.}
\email{s.moulay\_khatir@yahoo.fr}

\author{Farhat Shel}
\address{UR Analysis and Control of PDEs, UR 13ES64, Department of Mathematics, Faculty of Sciences of Monastir, University of Monastir, Tunisia}
\email{farhat.shel@ipeit.rnu.tn} 

\begin{abstract} The presence of a delay in a thermoelastic system destroys the well-posedness and the stabilizing effect of the heat conduction \cite{Rac12}. To avoid this problem we add to the system, at the delayed equation, a Kelvin-Voigt damping. At first, we prove the well-posedness of the system by the semigroup theory. Next, under appropriate assumptions, we prove the exponential stability of the system by introducing a suitable Lyapunov functional.
\end{abstract}

\subjclass[2010]{35B35, 35B40, 93D05 93D20}
\keywords{Thermoelastic system, delay, Kelvin-voigt damping, well posedness, exponential stability}

\maketitle

\tableofcontents

\section{Introduction}
Let us consider the following thermoelastic system with delay
\begin{equation}
\left\{ 
\begin{tabular}{l}
$u_{tt}(x,t)-\alpha u_{xx}(x,t-\tau)+\gamma \theta _x(x,t)=0,\;\;\;\;\;\;\text{ in }(0,\ell )\times (0,\infty ),$ \\
$\theta_{t}(x,t)-\kappa\theta_{xx}(x,t)+\gamma
u_{xt}(x,t)=0,\;\;\;\;\;\;\;\;\;\;\;\;\;\text{ in }(0,\ell)\times (0,\infty ),$\\
$u(0,t)=u(\ell ,t)=\theta_x (0,t)=\theta_x(\ell ,t)=0,\;\;\;\;\;\;\;\;\;t\geq 0$
\end{tabular}
\right.  \label{d1.0.0}
\end{equation}
where $\alpha , \gamma , \kappa $ and $\ell$ are some positive constants. The functions $u=u(x,t)$ and $\theta =\theta (x,t)$ describe respectively the displacement and the temperature difference, with $x \in (0,\ell)$ and $t \geq 0.$ Moreover,
 $\tau >0$ is the time delay. Racke proved in \cite{Rac12} that, under some initial and boundary conditions, the system (\ref{d1.0.0}) is not well posed and unstable even if $\tau$ is relatively small. However, it is well known that, in the absence of delay, the damping through the heat conduction is strong enough to produce an exponential stable system (see for example \cite{Rac02, Han92, Riv92}), and specially,\cite{Rac02} and \cite{Liu99} where various types of boundary conditions are associated to the one dimensional  thermoelastic systems.
 
 In recent years, the PDEs with time delays effects become an active area of research. In fact, time delays so often arise in many applications since,  most physical phenomena not only depend on the present state but also on some past occurrences, see for instance \cite{Suh80} and references therein, but as for the classical thermoelastic system, an arbitrary small delay may destroy the well-posedness of the problem or may destroy the stability, see also \cite{ Jor08, Bat05, Dat97, Dat88}.

In order to solve the problem, additional conditions or control terms have been used, we refer to \cite{Dat86, Dat88, Nic06, Amm09', Amm15, Nic18}, see also \cite{Mas16} and references therein . In this paper we add to the delayed equation, a Kelvin-Voigt damping of the form $-\beta u_{xxt}(x,t)$ for some real positive number $\beta,$
which eventually depends on $\alpha, \gamma, \kappa$ and $\tau.$ Then our system takes the form
  \begin{equation}
\left\{ 
\begin{tabular}{l}
$u_{tt}(x,t)-\alpha u_{xx}(x,t-\tau)-\beta u_{xxt}(x,t)+\gamma \theta _x(x,t)=0,\;\;\text{ in } \Omega\times (0,\infty ),$ \\
$\theta_{t}(x,t)-\kappa\theta_{xx}(x,t)+\gamma
u_{xt}(x,t)=0,\;\;\;\;\;\;\;\;\;\;\;\;\;\;\;\;\;\;\;\;\;\;\;\;\;\;\;\;\;\;\text{ in } \Omega \times (0,\infty ),$\\
$u(0,t)=u(\ell ,t)=0,\;\;\;\;\;\;\;\;\;\;\;\;\;\;\;\;\;\;\;\;\;\;\;\;\;\;\;\;\;\;\;\;\;\;\;\;\;\;\;\;\;\;\;\;\;\;\;\;\;\;\;\;\;\;\;\;\text{ in } (0,\infty ),$ \\ 
$\theta_x(0,t)=\theta_x(\ell ,t)=0,\;\;\;\;\;\;\;\;\;\;\;\;\;\;\;\;\;\;\;\;\;\;\;\;\;\;\;\;\;\;\;\;\;\;\;\;\;\;\;\;\;\;\;\;\;\;\;\;\;\;\;\;\;\text{ in }(0,\infty ),$ \\ 
$u_{x}(x,t-\tau)=f_0(x,t-\tau),\;\;\;\;\;\;\;\;\;\;\;\;\;\;\;\;\;\;\;\;\;\;\;\;\;\;\;\;\;\;\;\;\;\;\;\;\;\;\;\;\;\;\;\;\;\;\text{ in } \Omega\times (0,\tau ),$ \\
$u(x,0)=u_0(x), u_t(x,0)=u_1(x), \theta (x,0)=\theta _0(x),\;\;\;\;\;\;\;\;\;\;\;\;\text{ in } \Omega$
\end{tabular}
\right.  \label{d1.0}
\end{equation}
where the initial data $(u_0,u_1,  f_0, \theta_0)$ belongs to a suitable space and with  $\Omega=(0,\ell).$ We meanly investigate well-posedness and exponential stability of such initial-boundary value problem.

This idea arises from \cite{Amm15'} where the authors added a Kelvin-Voigt damping term to the abstract equation. More precisely, they considered the following system 
  \begin{equation}
\left\{ 
\begin{tabular}{l}
$u^{\prime \prime}(t) +aBB^*u^{\prime}(t)+BB^*u(t-\tau)=0,\;\;\text{ in }(0,\infty ),$ \\
$u(0)=u_0,\;\;u^\prime(0)=u_1,$ \\ 
$B^*u(t-\tau)=f_0(t-\tau),\;\;\;\;\;\;\;\;\;\;\;\;\;\;\;\;\;\;\;\;\;\;\;\text{ in }  (0,\tau ),$
\end{tabular}
\right.  \label{d1.01}
\end{equation}
where a "prime" denotes a one-dimensional derivative with respect to "t" and
where $B:\mathcal{D}(B)\subset H_1\rightarrow H$ is a linear unbounded operator from a Hilbert space $H_1$ to a Hilbert space $H,$ such that $B^*,$ the adjoint of $B,$ satisfies some properties of coercivity and compact embedding.  They obtained an exponential decay result under the assumption $\tau \leq a.$

In \cite{Mus13}, the authors dropped the time delay in the harmonic term of the elastic equation in (\ref{d1.0.0}) and added a delay term of the form $\int_{\tau_1}^{\tau_2}\mu(s)\theta_{xx}(x,t-s\tau)ds$ in the heat equation, where $\tau_1$ and $\tau_2$ are non-negative constants such that $\tau_1<\tau_2$ and $\mu:[\tau_1,\tau_2]\rightarrow \mathbb{R}$ is a bounded function. They proved an exponential decay result under the condition $\int_{\tau_1}^{\tau_2}|\mu(s)|ds<\kappa.$

 We define the energy of a solution of problem (\ref{d1.0}) as
 \begin{equation*}
E(t) :=\frac{1}{2}\int_{\Omega}\left( u_{t} ^{2}(x,t)+\alpha u_{x}^{2}(x,t)+ \theta ^{2}(x,t)\right)dx+\xi\int_{\Omega}\int_{0}^{1} u_{x}^2(x,t-\tau \rho) d\rho dx
\end{equation*}
where $\xi >0$ is a parameter fixed later on.

The paper is organized as follow. In section 2 we first formulate the problem (\ref{d1.0}) into an appropriate Hilbert space, and then we study the well-posedness of the system using semigroup theory. In section 3, we prove, using Lyapunov's method, a result of exponential stability of system (\ref{d1.0}).
 \section{Well-posedness of the problem}
 We introduce, as in \cite{Amm15'}, the new variable 
 \begin{equation}
 z(x,\rho ,t)=u_x(x,t-\tau \rho),\;\;\;\;\;\;\text{ in } \Omega\times (0,1)\times (0,\infty ),
 \end{equation}
 Clearly, $z(x,\rho,t)$ satisfies
 \begin{eqnarray}
 \tau z_t(x,\rho,t)+z_{\rho}(x,\rho,t)&=&0,\;\;\;\;\;\;\;\;\;\;\;\;x\in \Omega,\;\; \rho \in (0,1),\;\; t \in (0,+\infty),\\
 z(x,0,t)&=&u_x(x,t),\;\;\;x\in  \Omega,\;\; t \in (0,+\infty).
 \end{eqnarray}
 
Then, problem (\ref{d1.0}) takes the form
\begin{eqnarray}
u_{tt}(x,t)-\alpha z_{x}(x,1,t)-\beta u_{xxt}(x,t)+\gamma \theta _x(x,t)=0,&&\text{ in } \Omega\times (0,\infty ), \label{e1}\\
\tau z_t(x,\rho,t)+z_{\rho}(x,\rho,t)=0,&&\text{ in } \Omega\times(0,1)\times (0,+\infty), \label{e2}\\
\theta_{t}(x,t)-\kappa\theta_{xx}(x,t)+\gamma
u_{xt}(x,t)=0,&&\text{ in } \Omega\times (0,\infty ), \label{e3}\\
u(0,t)=u(\ell ,t)=0,&&\text{ in } (0,\infty ), \\ 
\theta_x(0,t)=\theta_x(\ell ,t)=0,&&\text{ in }(0,\infty ), \label{e4}
\end{eqnarray}
\begin{eqnarray} 
z(x,0,t)=u_{x}(x,t),&&\text{ in } \Omega\times (0,\infty ), \label{e5}\\
u(x,0)=u_0(x), u_t(x,0)=u_1(x), \theta (x,0)=\theta _0(x),&&\text{ in } \Omega, \label{e6}\\
 z(x,\rho, 0)=f_0(x,-\tau \rho),&&\text{ in } \Omega\times (0,1 ). \label{e7}
\end{eqnarray}

Observe that it follows from (\ref{e3})-(\ref{e4}) that $\int_\Omega \theta_{t}(x,t)dx=0$ that is, $\int_\Omega \theta(x,t)dx$ is conservative all the time. Without loss of generality, we assume that $\int_\Omega \theta(x,t)dx=0$. Otherwise, we can make the substitution $\tilde{\theta}(x,t)=\theta (x,t)-\frac{1}{\ell}\int_\Omega \theta_0(x)dx,$ in fact $(u,v,z,\theta)$ and $(u,v,z,\tilde{\theta})$ satisfy the same system (\ref{e1})-(\ref{e7}).

Let 
\begin{equation*}
\mathcal{H}=\left\{(f,g,p,h)\in H^{1}_0(\Omega)\times L^2(\Omega)\times L^2(\Omega \times (0,1)) \times L^2(\Omega)\mid \int_\Omega h(x)dx=0 \right\}.
\end{equation*}
Equipped with the following inner product: for any $U_k=(f_k,g_k,p_k,h_k) \in \mathcal{H},\;\;k=1,2,$
\begin{equation*}
\left\langle U_1,U_2\right\rangle _{\mathcal{H}}=\int_\Omega \left(\alpha f_{1x}(x)f_{2x}(x)+ g_1(x)g_2(x)+h_1(x)h_2(x) \right)dx+\xi\int_\Omega \int_0^1p_1(x,\rho)p_2(x,\rho)d\rho dx, 
\end{equation*}
$\mathcal{H}$ is a Hilbert space.

Define
\begin{equation*}
U:=(u,u_t,z,\theta)
\end{equation*} 
then, problem (\ref{d1.0}) can be formulated as a first order system of the form
\begin{equation}
\left\{ 
\begin{array}{c}
U^\prime =\mathcal{A}U, \\ 
U(0)=(u_0,u_1,f_0(.,-.\tau), \theta_0)
\end{array}
\right.  \label{d500}
\end{equation}
where the operator $\mathcal{A}$ is defined by
\begin{equation*}
\mathcal{A}\left( 
\begin{array}{c}
u \\ 
v \\ 
z \\ 
\theta
\end{array}
\right) =\left( 
\begin{array}{c}
v \\ 
(\alpha z(.,1)+\beta v_{x})_x-\gamma \theta _x \\ 
-\frac{1}{\tau}z_\rho \\ 
-\gamma v_{x}+\kappa \theta _{xx} 
\end{array}
\right)
\end{equation*}
with domain
\begin{equation*}
\mathcal{D}(\mathcal{A})=\left\{ 
\begin{array}{c}
U=(u,v,z,\theta)\in \mathcal{H}\cap \left[ H^{1}_0(\Omega)\times H^{1}_0(\Omega)\times L^2(\Omega ; H^1(0,1))\times  H^2(\Omega)\right]   \mid \\ 
\;\;z(.,0)=u_x\;\;\;\text{and} \;\; 
(\alpha z(.,1)+\beta v_{x}) \in H^1(\Omega)
\end{array}
\right\}
\end{equation*}
in the Hilbert space $\mathcal{H}.$

For to establish the existence of solution, we will prove that the operator $\mathcal{A}$ generates a $\mathcal{C}_0$-semigroup, and to do this, we will prove that $\mathcal{A}-mId$ generates $\mathcal{C}_0$-semigroup (of contractions), for an appropriate real number $m,$ function of $\xi, \alpha, \beta$ and $\tau.$ Then we apply the bounded perturbation theorem (Sect. III.1 of \cite{eng99}). In fact, we begin by the following result
\begin{lemma}
If $\xi>\frac{2\tau \alpha^2}{\beta},$ then there exists $m \in \mathbb{R}$ such that $\mathcal{A}-mId$ is dissipative maximal.
\end{lemma}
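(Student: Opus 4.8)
The plan is to apply the Lumer--Phillips theorem: I will show that for a suitable $m=m(\xi,\alpha,\beta,\tau)$ the operator $\mathcal{A}-mId$ is dissipative on the (densely defined) domain $\mathcal{D}(\mathcal{A})$, and that $\lambda I-(\mathcal{A}-mId)$ is surjective for one $\lambda>0$.

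\emph{Dissipativity.} For $U=(u,v,z,\theta)\in\mathcal{D}(\mathcal{A})$ I would compute $\mathrm{Re}\langle\mathcal{A}U,U\rangle_{\mathcal{H}}$ by integration by parts, using $v\in H^1_0(\Omega)$, the Neumann conditions $\theta_x(0)=\theta_x(\ell)=0$, and $z(\cdot,0)=u_x$. The heat term produces $-\kappa\int_\Omega\theta_x^2$; the two coupling terms in $\gamma$ combine, after one integration by parts using $v(0)=v(\ell)=0$, into a purely imaginary quantity and drop out of the real part; the transport term gives $\frac{\xi}{2\tau}\int_\Omega u_x^2-\frac{\xi}{2\tau}\int_\Omega z^2(\cdot,1)$ (using $z(\cdot,0)=u_x$); and the elastic/Kelvin--Voigt terms give $\alpha\int_\Omega u_xv_x-\alpha\int_\Omega z(\cdot,1)v_x-\beta\int_\Omega v_x^2$. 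Collecting,
\begin{equation*}
\mathrm{Re}\langle\mathcal{A}U,U\rangle_{\mathcal{H}} = \alpha\int_\Omega u_x v_x\,dx - \alpha\int_\Omega z(\cdot,1)v_x\,dx - \beta\int_\Omega v_x^2\,dx - \kappa\int_\Omega\theta_x^2\,dx + \frac{\xi}{2\tau}\int_\Omega u_x^2\,dx - \frac{\xi}{2\tau}\int_\Omega z^2(\cdot,1)\,dx.
\end{equation*}
I would then bound the two indefinite cross terms by Young's inequality, $\alpha|u_xv_x|\le\frac{\alpha\varepsilon_1}{2}v_x^2+\frac{\alpha}{2\varepsilon_1}u_x^2$ and $\alpha|z(\cdot,1)v_x|\le\frac{\alpha\varepsilon_2}{2}v_x^2+\frac{\alpha}{2\varepsilon_2}z^2(\cdot,1)$. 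Choosing $\varepsilon_2$ with $\frac{\alpha}{2\varepsilon_2}\le\frac{\xi}{2\tau}$ absorbs the $z^2(\cdot,1)$ term into its negative counterpart, and the hypothesis $\xi>\frac{2\tau\alpha^2}{\beta}$ guarantees there is room to pick $\varepsilon_1>0$ with $\frac{\alpha}{2}(\varepsilon_1+\varepsilon_2)\le\beta$, keeping the $v_x^2$ coefficient nonpositive. What remains is controlled by $C\int_\Omega u_x^2\le\frac{C}{\alpha}\|U\|_{\mathcal{H}}^2$, so $\mathrm{Re}\langle\mathcal{A}U,U\rangle_{\mathcal{H}}\le m\|U\|_{\mathcal{H}}^2$ with $m=\frac{1}{2\varepsilon_1}+\frac{\xi}{2\tau\alpha}$; thus $\mathcal{A}-mId$ is dissipative.

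\emph{Maximality.} Fix $\nu>\max(0,m)$ and $F=(f_1,f_2,f_3,f_4)\in\mathcal{H}$; I solve $(\nu I-\mathcal{A})U=F$, equivalently $(\lambda I-(\mathcal{A}-mId))U=F$ with $\lambda=\nu-m>0$. The first line gives $v=\nu u-f_1$; the transport line $\nu z+\frac1\tau z_\rho=f_3$ with $z(\cdot,0)=u_x$ is an ODE in $\rho$ solved explicitly, $z(x,\rho)=u_x(x)e^{-\nu\tau\rho}+\tau\int_0^\rho e^{-\nu\tau(\rho-\sigma)}f_3(x,\sigma)\,d\sigma$, so $z(\cdot,1)$ is an affine function of $u_x$ with known $L^2$ remainder. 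Substituting $v$ and $z(\cdot,1)$ reduces the problem to the elliptic system
\begin{align*}
\nu^2 u-a u_{xx}+\gamma\theta_x &= \tilde f_2,\\
\nu\theta-\kappa\theta_{xx}+\gamma\nu u_x &= \tilde f_4,
\end{align*}
with $a=\alpha e^{-\nu\tau}+\beta\nu>0$, $u\in H^1_0(\Omega)$, $\theta_x(0)=\theta_x(\ell)=0$ and $\int_\Omega\theta\,dx=0$. I would solve this by Lax--Milgram on $V=H^1_0(\Omega)\times\{h\in H^1(\Omega):\int_\Omega h\,dx=0\}$, testing the first equation with $w$ and the second, rescaled by $1/\nu$, with $\chi$, and keeping $(\alpha z(\cdot,1)+\beta v_x)$ under the integral (integrating by parts against $w_x$) so no extra regularity of the data is required. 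The crucial cancellation is that on the diagonal the asymmetric coupling contributes $\gamma\int_\Omega(\theta_x u+u_x\theta)\,dx$, whose real part vanishes after one integration by parts because $u(0)=u(\ell)=0$; the bilinear form then reduces to $\nu^2\int u^2+a\int u_x^2+\int\theta^2+\frac\kappa\nu\int\theta_x^2$, which is coercive and continuous on $V$. The mean-zero constraint on $\theta$ is automatically consistent: integrating the second equation and using $\int_\Omega f_4=0$ forces $\int_\Omega\theta=0$. From the weak equations one reads off that $(\alpha z(\cdot,1)+\beta v_x)$ has $L^2$ weak derivative, hence lies in $H^1(\Omega)$, and that $\theta\in H^2(\Omega)$ with the Neumann conditions, so $U=(u,v,z,\theta)\in\mathcal{D}(\mathcal{A})$, giving surjectivity.

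The main obstacle is the maximality step, and specifically the choice of variational formulation: without the $1/\nu$ rescaling of the heat equation the bilinear form is not coercive, and one must additionally track the mean-zero constraint and recover exactly the $H^1$/$H^2$ regularity required by $\mathcal{D}(\mathcal{A})$ without differentiating the data. The dissipativity computation is routine once the integrations by parts are organized; the sole role of the hypothesis $\xi>\frac{2\tau\alpha^2}{\beta}$ is to ensure the Young parameters $\varepsilon_1,\varepsilon_2$ can be chosen, and in fact any $\xi$ above the sharp threshold would suffice.
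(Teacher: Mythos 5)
Your proposal is correct and follows essentially the same route as the paper: the same integration-by-parts plus Young's-inequality dissipativity estimate (the paper uses a single parameter $\varepsilon=\beta/(2\alpha)$ where you use two, with the hypothesis $\xi>\frac{2\tau\alpha^2}{\beta}$ playing the identical absorbing role), and the same maximality argument --- eliminate $v$ algebraically, solve the transport equation for $z$ explicitly in $\rho$, then apply Lax--Milgram to the reduced $(u,\theta)$ system with the heat equation rescaled by $1/\lambda$ so that the $\gamma$-coupling cancels on the diagonal, and finally recover the $\mathcal{D}(\mathcal{A})$ regularity from the weak formulation. Your only notable deviation is the variational space for the temperature component ($H^1$ with zero mean, rather than the paper's $H^2$ functions with Neumann conditions equipped with an $H^1$ inner product), which is a harmless --- in fact cleaner --- variant, and your explicit verification that the mean-zero constraint is automatically satisfied tidies up a point the paper glosses over.
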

\begin{proof}
Take $U=(u,v,z,h) \in \mathcal{D}(\mathcal{A}).$
\begin{eqnarray}
\left\langle \mathcal{A}U,U\right\rangle _{\mathcal{H}}&=&\alpha\int_\Omega v_{x}(x)u_{x}(x)dx+ \int_\Omega \left((\alpha z(.,1)+\beta v_{x})_x(x)-\gamma \theta _x\right) v(x)dx \notag\\
&+&\int_\Omega \left(-\gamma v_{x}+\kappa \theta _{xx}\right)(x) \theta(x)dx-\frac{\xi}{\tau}\int_\Omega \int_0^1z_\rho(x,\rho)z(x,\rho)d\rho dx. \label{dis-1}
\end{eqnarray}
Integrating by parts, using boundary conditions of $u,v$ and $\theta$ to get
\begin{eqnarray*}
 \int_\Omega \left((\alpha z(.,1)+\beta v_{x})_x-\gamma \theta _x\right)(x) v(x)dx
+\int_\Omega \left(-\gamma v_{x}+\kappa \theta _{xx}\right)(x) \theta(x)dx \\
=-\alpha \int_\Omega  z(x,1)v(x)dx-\beta\int_\Omega v^2_{x}(x)dx
-\kappa\int_\Omega \theta _{x}^2(x)dx.
\end{eqnarray*}
Integrating by parts in $\rho$, we get
\begin{equation*}
\int_\Omega \int_0^1z_\rho(x,\rho)z(x,\rho)d\rho dx=\frac{1}{2}\int_\Omega  \left( z^2(x,1)-z^2(x,0)\right)dx.
\end{equation*}
Then (\ref{dis-1}) become
\begin{eqnarray*}
\left\langle \mathcal{A}U,U\right\rangle _{\mathcal{H}}&=&\alpha\int_\Omega v_{x}(x)u_{x}(x)dx-\alpha \int_\Omega  z(x,1)v_x(x)dx\\
&-&\beta\int_\Omega v^2_{x}(x)dx
-\kappa\int_\Omega \theta _{x}^2(x)dx-\frac{\xi}{2\tau}\int_\Omega  \left( z^2(x,1)-z^2(x,0)\right)dx,
\end{eqnarray*}
from which follows, using the Young's inequality and that $z(x,0)=u_x(x),$
\begin{eqnarray*}
\left\langle \mathcal{A}U,U\right\rangle _{\mathcal{H}}&\leq & \left(\alpha\varepsilon -\beta \right) \int_\Omega v_{x}^2(x)dx+\left(\frac{\alpha}{2\varepsilon}-\frac{\xi}{2\tau} \right) \int_\Omega  z^2(x,1)dx+\left(\frac{\alpha}{2\varepsilon}+\frac{\xi}{2\tau} \right) \int_\Omega  u^2_x(x)dx\\
& -&\kappa\int_\Omega \theta _{x}^2(x)dx.
\end{eqnarray*}
Choosing $\alpha\varepsilon =\frac{\beta}{2}$, or equivalently, $\varepsilon =\frac{\beta}{2\alpha},$ we get
\begin{equation*}
\left\langle \mathcal{A}U,U\right\rangle _{\mathcal{H}}\leq  -\frac{\beta}{2} \int_\Omega v_{x}^2(x)dx+\left(\frac{\alpha^2}{\beta}-\frac{\xi}{2\tau} \right) \int_\Omega  z^2(x,1)dx+\left(\frac{\alpha^2}{\beta}+\frac{\xi}{2\tau} \right) \int_\Omega  u^2_x(x)dx-\kappa\int_\Omega \theta _{x}^2(x)dx.
\end{equation*}
Then we choose $\xi>0$ such that $\frac{\alpha^2}{\beta}-\frac{\xi}{2\tau} <0,$ that is,  $\xi >\frac{2\tau \alpha^2}{\beta}.$ Furthermore we take $m=\frac{\alpha^2}{\beta}+\frac{\xi}{2\tau}>\frac{2\alpha^2}{\beta},$ to get
\begin{equation*}
\left\langle \left( \mathcal{A}-mId\right)U,U\right\rangle _{\mathcal{H}} \leq -\frac{\beta}{2} \int_\Omega v_{x}^2(x)dx+\left(\frac{\alpha^2}{\beta}-\frac{\xi}{2\tau} \right) \int_\Omega  z^2(x,1)dx-\kappa\int_\Omega \theta _{x}^2(x)dx \leq 0
\end{equation*}
which means that the operator $\mathcal{A}-mId\;$ is dissipative. 

Now, we will prove the maximality of $\mathcal{A}-mId.\;$ It suffices to show that $\lambda Id-\mathcal{A}$ is surjective for a fixed $\lambda >m.$ Given $(f,g,p,h) \in \mathcal{H},$ we look for $U=(u,v,z,\theta) \in \mathcal{D}(\mathcal{A}),$ solution of
$$\left(\lambda Id- \mathcal{A}\right) \left( 
\begin{array}{c}
u \\ 
v \\ 
z \\ 
\theta
\end{array}
\right)=\left( 
\begin{array}{c}
f \\ 
g \\ 
p \\ 
h
\end{array}
\right),$$ 
that is verifying
\begin{equation}
\left\{ 
\begin{tabular}{l}
$\lambda u- v=f,$ \\
$\lambda v-\left(\alpha z(.,1)+\beta v_x \right)_x+\gamma \theta_x =g ,$\\
$\lambda z- \frac{1}{\tau}z_\rho =p,$ \\ 
$\lambda \theta + \gamma v_x -\kappa \theta_{xx}= h.$
\end{tabular}
\right.  \label{max1}
\end{equation}
Suppose that we have found $u$ with the appropriate regularity. Then,
\begin{equation}
v=\lambda u -f. \label{max2}
\end{equation}
To determine $z,$ recall that $z(.,0)=u_x,$ then, by $(\ref{max1})_3$, we obtain
\begin{equation}
z(.,\rho)=e^{-\lambda \tau \rho}u_x+ \tau e^{-\lambda \tau \rho}\int_0^\rho p(s)e^{\lambda \tau s}ds,\label{max22}
\end{equation}
and, in particular
\begin{equation}
z(x,1)=e^{-\lambda \tau}u_x+ z_0, \label{max3}
\end{equation}
with $z_0 \in L^2(\Omega)$ defined by
\begin{equation*}
z_0=\tau e^{-\lambda \tau}\int_0^1 p(s)e^{\lambda \tau s}ds. \label{max4}
\end{equation*}
Now, Multiplying $(\ref{max1})_2$ and $(\ref{max1})_4$ respectively by $w \in H^1_0(\Omega)$ and $\varphi \in H^2(\Omega)$ such that $\varphi_x(0)=\varphi_x(\ell)=0,$ we obtain after some integrations by parts taking into account boundary conditions on $v,$ $\theta$ and $w,$
\begin{equation}
\lambda \int_\Omega vw dx+ \int_\Omega \left(\alpha z(.,1)+\beta v_x \right)w_x dx + \gamma \int_\Omega \theta_x w dx = \int_\Omega gw dx \label{max5}
\end{equation}
and
\begin{equation}
\lambda \int_\Omega \theta \varphi dx- \gamma \int_\Omega v \varphi_x dx + \kappa \int_\Omega \theta_x \varphi_x dx = \int_\Omega h \varphi dx. \label{max6}
\end{equation}
Substituting (\ref{max2}) and (\ref{max3}) into (\ref{max5}) and (\ref{max6}), we get 
\begin{equation}
\lambda^2 \int_\Omega uw dx+\left(\alpha e^{-\lambda \tau}+\lambda \beta \right)  \int_\Omega u_x w_x dx + \gamma \int_\Omega \theta_x w dx = \int_\Omega (g+\lambda f)w dx + \int_\Omega (f_x-\alpha z_0)w_x dx \label{max7}
\end{equation}
and
\begin{equation}
\lambda \int_\Omega \theta \varphi dx- \lambda \gamma  \int_\Omega u \varphi_x dx + \kappa \int_\Omega \theta_x \varphi_x dx = \int_\Omega (h-\gamma f) \varphi dx. \label{max8}
\end{equation}
Summing (\ref{max7}), and (\ref{max8}) multiplied by $\frac{1}{\lambda},$  we get
\begin{equation}
b\left((u,\theta),(w,\varphi) \right)=F(w,\varphi) \label{max9}
\end{equation}
with
$$b\left((u,\theta),(w,\varphi) \right)= \int_\Omega \left[ \lambda^2uw+\left(\alpha e^{-\lambda \tau}+\lambda \beta \right) u_x w_x\right]  dx+\int_\Omega \left( \theta \varphi + \frac{\kappa}{\lambda}\theta_x \varphi_x\right)  dx+ \gamma \int_\Omega \left( \theta_x w- u \varphi_x\right) dx$$
and
$$F(w,\varphi)= \int_\Omega (g+\lambda f)w dx + \int_\Omega (f_x-\alpha z_0)w_x dx+\frac{1}{\lambda}\int_\Omega (h-\gamma f) \varphi dx.$$
The space $$\mathcal{F}:=\left\lbrace (w,\varphi) \in H^1_0(\Omega)\times H^2(\Omega)\mid \theta_x(0)=\theta_x(\ell)=0 \right\rbrace,$$
equipped with the inner product
$$\langle (w_1,\varphi_1),(w_2,\varphi_2) \rangle _\mathcal{F}=\int_\Omega \left(w_1w_2+w_{1x}w_{2x}+\varphi_1\varphi_2+\varphi_{1x}\varphi_{2x} \right)dx, $$
is a Hilbert space; the bilinear form $b$ on $\mathcal{F}\times \mathcal{F}$ and the linear form $F$ on $\mathcal{F}$ are continuous. Moreover, for every $(w,\varphi) \in \mathcal{F},$
$$\vert b\left((w,\varphi),(w,\varphi)\right) \vert \geq c\Vert (w,\varphi)\Vert ^2_\mathcal{H}$$
with $c:=\min \left(\lambda^2, \left(\alpha e^{-\lambda \tau}+\lambda \beta \right), 1, \frac{\kappa}{\lambda}\right)>0. $ 
 
 By the Lax-Milgram lemma, equation (\ref{max9}) has a unique solution $(u,\theta ) \in \mathcal{F}.$ Immediately, from (\ref{max2}), we have that $v \in H^1_0(\Omega).$ Now, if we consider $(w,\varphi) \in \lbrace 0\rbrace \times \mathcal{D}(\Omega)$ in (\ref{max9}) we deduce that  equation $(\ref{max1})_4$ holds true. The function $z,$ defined by (\ref{max3}), belongs to $L^1(\Omega , H^1(0,1))$ and satisfies $(\ref{max1})_3$ and $z(.,0)=u_x.$ 
 
The functions $z(.,1)$ and $v_x$ belong to $L^2(\Omega),$ then we take $(w,\varphi) \in \mathcal{D}(\Omega) \times \lbrace 0\rbrace$ in (\ref{max9}) to deduce that $\alpha z(.,1)+\beta v_x$ belongs to $H^1_0(\Omega)$ and that  equation $(\ref{max1})_1$ holds true. 
 
 Let $\tilde{\theta} = \theta-\frac{1}{\ell}\int_\Omega \theta_0dx,$ then  we have that  $U=(u,v,z,\tilde{\theta})$ belongs to $\mathcal{D}(\mathcal{A})$, and $\mathcal{A}U=(f,g,p,h).$ Thus, $\lambda Id-\mathcal{A}$ is surjective for every $\lambda>0$.
 \end{proof}
  In conclusion the operator $\mathcal{A}-mId$ generates a $\mathcal{C}_0$-semigroup of contraction. By the bounded perturbation theorem (Sect. III.1 of \cite{eng99}), we have
 \begin{lemma}
 The operator $\mathcal{A}$ generates a $\mathcal{C}_0$-semigroup on $\mathcal{H}$.
 \end{lemma}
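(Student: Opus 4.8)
The plan is to obtain this statement as an immediate consequence of the preceding lemma together with two standard results from semigroup theory: the Lumer--Phillips theorem and the bounded perturbation theorem. No new estimate is required; all the analytic substance has already been produced.

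First I would fix $\xi > \frac{2\tau \alpha^2}{\beta}$ and let $m \in \mathbb{R}$ be the number supplied by the previous lemma, so that $\mathcal{A} - mId$ is dissipative and maximal on $\mathcal{H}$. Observe that $\mathcal{D}(\mathcal{A})$ is dense in $\mathcal{H}$ (it contains, for instance, the quadruples built from smooth functions satisfying the boundary conditions, which are dense in each factor of $\mathcal{H}$). Lumer--Phillips then applies and gives that $\mathcal{A} - mId$ is the infinitesimal generator of a $\mathcal{C}_0$-semigroup of contractions on $\mathcal{H}$; this is the conclusion already recorded in the sentence preceding the statement.

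Next I would simply decompose
\begin{equation*}
\mathcal{A} = \left(\mathcal{A} - mId\right) + mId,
\end{equation*}
and note that $mId$ is a bounded linear operator on $\mathcal{H}$, being just scalar multiplication by $m$. Since the sum of a generator and a bounded operator is again a generator, the bounded perturbation theorem (Sect. III.1 of \cite{eng99}) yields at once that $\mathcal{A}$ generates a $\mathcal{C}_0$-semigroup on $\mathcal{H}$, which is exactly the assertion to be proved.

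There is essentially no obstacle at this stage: the hard work, namely the dissipativity estimate via Young's inequality and the surjectivity of $\lambda Id - \mathcal{A}$ through the Lax--Milgram lemma, was carried out in the previous lemma. The present step is a formal bookkeeping argument, and the only point deserving a moment's care is confirming the density of $\mathcal{D}(\mathcal{A})$ so that Lumer--Phillips is genuinely available before the perturbation theorem is invoked.
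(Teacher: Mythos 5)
Your proposal is correct and follows exactly the paper's route: maximal dissipativity of $\mathcal{A}-mId$ from the previous lemma gives (via Lumer--Phillips) a contraction semigroup, and the bounded perturbation theorem applied to the decomposition $\mathcal{A}=(\mathcal{A}-mId)+mId$ yields the conclusion. The only cosmetic difference is that you explicitly verify density of $\mathcal{D}(\mathcal{A})$, which is in fact automatic for maximal dissipative operators on a Hilbert space and is left implicit in the paper.
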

  Finally, the well-posedness result follows from semigroup theory.
  \begin{theorem}
For any initial datum $U_0 \in \mathcal{H}$ there exists a unique solution $U \in \mathcal{C}([0,+\infty),\mathcal{H})$ of problem (\ref{d500}). Moreover, if $U_0 \in \mathcal{D}(\mathcal{A}),$ then $U \in \mathcal{C}([0,+\infty),\mathcal{D}(\mathcal{A}))\cap\mathcal{C}^1([0,+\infty),\mathcal{H}).$
\end{theorem}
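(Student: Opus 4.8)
The plan is to read off this theorem as an immediate corollary of the generation result just established. By the preceding lemma, $\mathcal{A}$ generates a $\mathcal{C}_0$-semigroup $(T(t))_{t\geq 0}$ on $\mathcal{H}$; the whole statement is then the standard dictionary between a semigroup and its associated abstract Cauchy problem (\ref{d500}), so no new estimate is required. Concretely, I would set $U(t):=T(t)U_0$ and invoke the generation theorem for $\mathcal{C}_0$-semigroups together with its Cauchy-problem corollary (Sect.\ II.6 of \cite{eng99}).

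First, for an arbitrary $U_0\in\mathcal{H}$, the orbit map $t\mapsto T(t)U_0$ is continuous on $[0,+\infty)$ by the strong continuity of the semigroup, which yields $U\in\mathcal{C}([0,+\infty),\mathcal{H})$. This $U$ is the (mild) solution of (\ref{d500}), and uniqueness follows from the semigroup property: any two solutions issued from the same datum must coincide, since $T(t)$ is uniquely determined by its generator $\mathcal{A}$.

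Second, for $U_0\in\mathcal{D}(\mathcal{A})$, the classical regularity part of the generation theorem applies: the orbit remains in the domain, $T(t)U_0\in\mathcal{D}(\mathcal{A})$ for every $t\geq 0$, the map $t\mapsto T(t)U_0$ is continuously differentiable into $\mathcal{H}$ with $U'(t)=\mathcal{A}U(t)$, and $t\mapsto\mathcal{A}T(t)U_0=T(t)\mathcal{A}U_0$ is continuous. Hence $U\in\mathcal{C}([0,+\infty),\mathcal{D}(\mathcal{A}))\cap\mathcal{C}^1([0,+\infty),\mathcal{H})$, which is exactly the asserted regularity, and this $U$ is then a strong solution of (\ref{d500}).

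There is essentially no analytic obstacle left, since the dissipativity and maximality of $\mathcal{A}-mId$ (hence the generation property) have already been proved. The only point that deserves an explicit check is that $\mathcal{D}(\mathcal{A})$ is dense in $\mathcal{H}$, as required for applying the generation theorem in Lumer--Phillips form; I would record this density, noting that $\mathcal{D}(\mathcal{A})$ contains the smooth data satisfying the compatibility condition $z(\cdot,0)=u_x$, which are dense in $\mathcal{H}$, and then conclude by citing \cite{eng99}.
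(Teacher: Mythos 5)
Your proposal is correct and follows essentially the same route as the paper: the paper likewise derives this theorem directly from the preceding generation lemma, invoking the standard correspondence in \cite{eng99} between a $\mathcal{C}_0$-semigroup and the mild/classical solutions of its abstract Cauchy problem. Your extra remark on density of $\mathcal{D}(\mathcal{A})$ is a reasonable precaution, though on a Hilbert space it is automatic once $\mathcal{A}-m\,Id$ is dissipative and maximal, so no separate verification is actually needed.
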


\section{Exponential stability}
Based on Lyapunov method, we prove that the system (\ref{d1.0}) is exponentially stable for some $\beta>0.$ More precisely:
\begin{theorem}
There exists $\beta_0>0$ such that for every $\beta \geq \beta_0,$ the system (\ref{d1.0}) is exponentially stable.
\end{theorem}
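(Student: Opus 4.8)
The plan is to construct a Lyapunov functional $L$ equivalent to the energy $E$ and to prove $\frac{d}{dt}L\le -c\,E$ for some $c>0$, whence $E(t)\le C E(0)e^{-\omega t}$ by a Gronwall argument. Throughout, $\|\cdot\|$ denotes the $L^2(\Omega)$-norm, and all computations are first done for strong solutions $U_0\in\mathcal{D}(\mathcal{A})$ and then extended by density. First I would differentiate $E$ along (\ref{e1})--(\ref{e7}). Integrating by parts and using the boundary conditions, the two coupling terms $\pm\gamma\int_\Omega u_{xt}\theta\,dx$ cancel (the usual thermoelastic cancellation), and combining the derivative of $\tfrac{\alpha}{2}\|u_x\|^2$ with the delayed stiffness term yields
\begin{equation*}
\dot E(t)=\alpha\int_\Omega u_{xt}\big(u_x-z(\cdot,1)\big)dx-\beta\int_\Omega u_{xt}^2\,dx-\kappa\int_\Omega\theta_x^2\,dx+\frac{\xi}{\tau}\int_\Omega\big(u_x^2-z^2(\cdot,1)\big)dx .
\end{equation*}
The genuinely dissipative terms are $-\beta\|u_{xt}\|^2$ (Kelvin--Voigt) and $-\kappa\|\theta_x\|^2$ (heat), while the positive term $\tfrac{\xi}{\tau}\|u_x\|^2$ and the sign-indefinite memory term $\alpha\int_\Omega u_{xt}(u_x-z(\cdot,1))\,dx$ — both produced by the \emph{delayed} stiffness — obstruct dissipativity of $E$ itself.

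To absorb these I would introduce two auxiliary functionals. The first, $\Phi_1(t)=\int_\Omega u\,u_t\,dx$, has derivative $\dot\Phi_1=\|u_t\|^2-\alpha\int_\Omega u_x z(\cdot,1)\,dx-\beta\int_\Omega u_x u_{xt}\,dx+\gamma\int_\Omega u_x\theta\,dx$; writing $z(\cdot,1)=u_x-(u_x-z(\cdot,1))$ extracts the crucial negative term $-\alpha\|u_x\|^2$, designed to dominate $\tfrac{\xi}{\tau}\|u_x\|^2$ (here I am free to fix $\xi$ in the definition of $E$, since $E$ is equivalent to the $\mathcal{H}$-norm for every $\xi>0$). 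The remaining term $-\beta\int_\Omega u_x u_{xt}\,dx$ and the coupling $\gamma\int_\Omega u_x\theta\,dx$ are handled by Young's inequality and the Poincaré--Wirtinger bound. The second functional must control the memory quantity $\|u_x-z(\cdot,1)\|^2=\|u_x(t)-u_x(t-\tau)\|^2$, which is \emph{not} controlled by the instantaneous energy; for this I would use the weighted history functional
\begin{equation*}
\Phi_2(t)=\int_\Omega\int_0^1 e^{-2\tau\rho}\big(z(x,\rho)-u_x(x)\big)^2\,d\rho\,dx .
\end{equation*}
Since the integrand vanishes at $\rho=0$, integrating by parts in $\rho$ gives $\dot\Phi_2\le -\tfrac{e^{-2\tau}}{\tau}\|u_x-z(\cdot,1)\|^2-2\int_\Omega\int_0^1 e^{-2\tau\rho}(z-u_x)^2\,d\rho\,dx+\delta\|u_{xt}\|^2$, so $\Phi_2$ delivers negative control of both the memory term and the delay integral $\int\!\int z^2$ (through $z=u_x+(z-u_x)$), at the price of a $\|u_{xt}\|^2$ term absorbable by Kelvin--Voigt; crucially $\Phi_2\le C E$, so it does not spoil the equivalence with $E$. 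The Poincaré inequality $\|u_t\|\le C_P\|u_{xt}\|$ (Dirichlet data) and the Poincaré--Wirtinger inequality $\|\theta\|\le C_P\|\theta_x\|$ (using the normalization $\int_\Omega\theta=0$) then reduce the energy's $\|u_t\|^2$ and $\|\theta\|^2$ to the available dissipation.

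I would then set $L=NE+\mu_1\Phi_1+\mu_2\Phi_2$ and fix the Young parameters and the weights $\mu_1,\mu_2,N$ in order: small $\delta$'s so that $\Phi_1$ retains a net $-\tfrac{\alpha}{2}\|u_x\|^2$; $\mu_2$ large so that $\Phi_2$ overcomes the positive $\|u_x-z(\cdot,1)\|^2$ arising from $\dot E$ and $\dot\Phi_1$; and finally $N$ large to recover $L\simeq E$ and to let $-N\beta\|u_{xt}\|^2$ swallow every residual velocity-gradient contribution. The hard part will be exactly this last balancing: controlling the delayed-stiffness memory costs a multiple of $\|u_{xt}\|^2$ whose size, relative to the budget $N\beta\|u_{xt}\|^2$, is governed by the ratios $\alpha^2/\beta$, $\xi/\tau$, $\tau e^{2\tau}$ and the Poincaré constant $C_P=(\ell/\pi)^2$. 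Tracking these dependences, the coupled system of inequalities closes only when $\beta$ exceeds an explicit threshold $\beta_0=\beta_0(\alpha,\gamma,\kappa,\tau,\ell)$ — which is precisely the hypothesis $\beta\ge\beta_0$ and reflects that the Kelvin--Voigt damping must be strong enough to counteract the destabilizing effect of the delay in the elastic term. Once $L'\le -cE\le -c'L$ is established, integration together with $L\simeq E$ gives the claimed exponential decay.
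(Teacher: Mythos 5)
Your proposal is correct in substance and reaches the theorem by the same general method (a Lyapunov functional equivalent to the energy, Poincar\'e inequalities, and a largeness threshold on $\beta$), but your treatment of the delay is genuinely different from the paper's. The paper uses six functionals $V_1,\dots,V_6$: the history variable is controlled through $V_4=\int_0^1 e^{-2\lambda\rho}\|z(\cdot,\rho)\|^2d\rho$ together with the \emph{linear} cross term $V_5=-\int_0^1 e^{-\lambda\rho}f(\rho)\langle z(\cdot,\rho),u_x\rangle d\rho$, where the weight $f$ is manufactured to solve $(e^{-\lambda\rho}f(\rho))'=-e^{-2\lambda\rho}$, and the resulting system of sign constraints is closed only through a lengthy asymptotic analysis in $h=e^{-2\lambda}$ as $\lambda\to\infty$ (ending with the admissible choice $\beta=\alpha\tau e^{4\lambda}$). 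You instead reorganize $\dot E$ so that the delay enters only through the single memory quantity $u_x-z(\cdot,1)=u_x(t)-u_x(t-\tau)$, and control it by the \emph{quadratic} deviation functional $\Phi_2=\int_\Omega\int_0^1 e^{-2\tau\rho}\bigl(z-u_x\bigr)^2d\rho\,dx$, whose integration by parts in $\rho$ has no boundary contribution at $\rho=0$ (since $z(\cdot,0)=u_x$) and therefore produces exactly the negative term $-\tfrac{e^{-2\tau}}{\tau}\|u_x-z(\cdot,1)\|^2$ that is needed; the paper never forms this combination. Your route eliminates the bespoke weight $f$ and the entire five-step expansion machinery, at the modest price of choosing the parameter $\xi$ in $E$ small (legitimate, as you note, since the energies are equivalent norms for every $\xi>0$). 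Two slips to repair in the write-up. First, your stated bound for $\dot\Phi_2$ cannot hold as written: the cross term $-2\int_\Omega\int_0^1 e^{-2\tau\rho}(z-u_x)u_{xt}\,d\rho\,dx$ forces, by Young, an estimate of the form $-(2-\epsilon)\Phi_2+\tfrac{1-e^{-2\tau}}{2\tau\epsilon}\|u_{xt}\|^2$; you cannot keep the full coefficient $-2$ on $\Phi_2$ while paying only a small $\delta\|u_{xt}\|^2$, though the corrected version serves your purpose identically. Second, the ordering ``fix $\mu_2$, then $N$'' is circular as stated: the memory term produced by $N\dot E$ is proportional to $N$, so $\mu_2$ must scale like $N$, and then the $\|u_{xt}\|^2$ cost of $\mu_2\dot\Phi_2$ is also of order $N$; the scheme still closes because that cost is $N\cdot O_\tau(1)\,\|u_{xt}\|^2$ against the budget $N\beta\|u_{xt}\|^2$, and the cost $\beta^2/\alpha\,\|u_{xt}\|^2$ coming from $-\beta\int_\Omega u_xu_{xt}\,dx$ in $\dot\Phi_1$ is beaten once $N\gtrsim\beta/\alpha$ --- this balancing is precisely where $\beta\ge\beta_0$ is consumed. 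Stating the hierarchy explicitly ($\mu_1=1$, $N\sim\beta/\alpha$, $\mu_2\sim N$, $\xi\lesssim\alpha\tau/N$) makes the argument go through and yields an explicit $\beta_0(\alpha,\gamma,\kappa,\tau,\ell)$, arguably in a more transparent way than the paper's asymptotics.
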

\begin{proof}
We take as Lyapunov function
\begin{equation*}
V(t):=N_1V_1(t)+\alpha N_2V_2(t)+N_3V_3(t)+N_4V_4(t)+N_5V_5(t)+N_6V_6(t)
\end{equation*} 
where
\begin{eqnarray*}
V_1(t)&:=&\frac{1}{2}\left\| u_t\right\|^2=\frac{1}{2}\int_\Omega u_t^2dx, \;\;V_2(t):=\;\frac{1}{2}\left\| u_x\right\|^2=\frac{1}{2}\int_\Omega u_x^2dx,\;\;
V_3(t):=\;\frac{1}{2}\left\| \theta\right\|^2=\frac{1}{2}\int_\Omega \theta^2dx, \\
V_4(t)&:=&\int_0^1e^{-2\lambda \rho}\left\| z(.,\rho,t)\right\|^2d\rho =\int_0^1e^{-2\lambda \rho}\int_\Omega z^2(x,\rho,t)dx d\rho,\\
V_5(t)&:=&-\int_0^1e^{-\lambda \rho}f(\rho)\left\langle z(.,\rho,t),u_x\right\rangle d\rho =-\int_0^1e^{-\lambda \rho}f(\rho)\int_\Omega z(x,\rho, t)u_x(x,t)dx d\rho\;\;\text{and}\\
V_6(t)&:=&\langle u,u_t \rangle =\int_\Omega uu_t dx.
\end{eqnarray*}
 $f$ is a real function defined on $[0,1]$ and that will be determined later. The constants $N_1, N_2, N_3, N_4, N_5$ and $ N_6$ are positive numbers to be fixed later too.

Denote by $\tilde{V}(t)$  the energy defined by
$$\tilde{V}(t):=N_1V_1(t)+\alpha N_2V_2(t)+N_3V_3(t)+N_4V_4(t).$$
It is clear that $\tilde{V}(t)$ is equivalent to $E(t).$ Then for a suitable choice of  $f$ we will prove that we can find $ \lbrace N_1,...,N_6\rbrace$ and $\beta>0$ such that the following two assumptions are satisfied:
\begin{enumerate}
\item[(A1)]  $V(t)$ is equivalent to $\tilde{V}(t),$
\item[(A2)] $V^\prime(t) \leq -n_0\tilde{V}(t),$
for some positive number $n_0$. 
\end{enumerate}
The rest of the proof will be divided into three parts:

\textit{First part:} it concerns the second assumption (A2). We start with the following lemma
\begin{lemma} \label{lem2}
Let $V(t)$ be defined as before. By choosing a function $f$ satisfying
\begin{equation}
-e^{-2\lambda \rho}=(e^{-\lambda \rho} f(\rho))^\prime,\;\;\lambda>0 \label{cf}
\end{equation}
and by taking $N_3=N_1,$ $N_6\beta=N_2\alpha$ and $N_6\alpha=\frac{f(1)e^{-\lambda}}{\tau}N_5$ we have that for every positive real numbers $\varepsilon_1, \varepsilon_2, \varepsilon_3$ and $\varepsilon_4,$ 
\begin{eqnarray*}
V^\prime (t)
 &\leq & \left( -N_4 \frac{e^{-2\lambda}}{\tau}+N_1\frac{\alpha \varepsilon_1}{2} \right)\left\| z(.,1,.)\right\|^2 \\
 &+&\left(-2\frac{k_1}{\tau}N_4+\frac{N_5}{2\tau}\left( \varepsilon_2+\frac{\tau}{\varepsilon_3}\right)   \right)V_4(t) \\
&+& \left(  \frac{N_4}{\tau}+\frac{N_5}{\tau}\left(\frac{\Gamma}{2\varepsilon_2}-\Lambda \right)+N_5\frac{\Psi\gamma \varepsilon_4 c_p}{2\alpha \tau} \right)\left\| u_x\right\|^2 \\
&+& \left(N_1\left(\frac{\alpha}{2\varepsilon_1}-\beta \right) +N_5\left( \frac{\varepsilon_3}{2}\Phi +\frac{\Psi c_p}{2\alpha \tau}\right)   \right)\left\| u_{tx}\right\|^2\\
&+&\left( -N_1\kappa+N_5\frac{\Psi\gamma }{2\alpha \tau\varepsilon_4} \right) \left\| \theta_{x}\right\|^2 
\end{eqnarray*}
\end{lemma}
where $c_p>0$ is the Poincaré constant associated to $\Omega,$ (it can be taken equal to $\frac{\ell^2}{2}$) and
\begin{equation*}
\Psi :=f(1)e^{-\lambda},\;\;\;\Lambda :=f(0)\;\;\;\text{and}\;\;\Phi:=\int_0^1f^2(\rho)d\rho.
\end{equation*}
Furthermore, $$\Gamma:=\int_0^1e^{-2\lambda \rho}d\rho=\frac{1-e^{-2\lambda}}{2\lambda}.$$
Notice that, in view of (\ref{cf}), we have
\begin{equation*}
\Lambda =\Psi+\Gamma.
\end{equation*}
\begin{proof}
Computing the derivatives of $V_1, V_2, V_3, V_4$ and $V_5$
using integration by parts, boundary conditions and Youg's inequality, we have
\begin{eqnarray*}
V^\prime _1(t)=\left\langle u_{tt},u_t\right\rangle &=&\left\langle (\alpha z(.,1,.)+\beta u_{tx})_x,u_t\right\rangle - \gamma \left\langle \theta_x,u_t\right\rangle\\
&=&-\left\langle \alpha z(.,1,.)+\beta u_{tx},u_{tx}\right\rangle - \gamma \left\langle \theta_x,u_t\right\rangle \\
&=&-\left\langle \alpha z(.,1,.),u_{tx}\right\rangle -\beta \left\| u_{tx}\right\|^2  - \gamma \left\langle \theta_x,u_t\right\rangle \\
&\leq & \left(\frac{\alpha}{2\varepsilon_1}-\beta \right)\left\| u_{tx}\right\|^2 +\frac{\alpha \varepsilon_1}{2}\left\| z(.,1,.)\right\|^2- \gamma \left\langle \theta_x,u_t\right\rangle,
\end{eqnarray*} 
\begin{eqnarray*}
V^\prime _2(t)&=& \left\langle u_{xt},u_x\right\rangle, 
\end{eqnarray*}
\begin{eqnarray*}
V^\prime _3(t)=\left\langle \theta_t,\theta\right\rangle &= &-\gamma \left\langle (u_{xt},\theta\right\rangle + \kappa\left\langle \theta_{xx},\theta\right\rangle
\\ 
&=& \gamma \left\langle u_{t},\theta_x\right\rangle - \kappa \left\| \theta_{x}\right\|^2.
\end{eqnarray*}
The derivative of $V_4$ is
\begin{eqnarray*}
V^\prime _4(t)&= &2\int_0^1e^{-2\lambda \rho}\left\langle z(.,\rho,.),z_t(.,\rho,.)\right\rangle d\rho \\ &= &-\frac{2}{\tau}\int_0^1e^{-2\lambda \rho}\left\langle z(.,\rho,.),z_\rho(.,\rho,.)\right\rangle d\rho \\
&= &-\frac{e^{-2\lambda}}{\tau}\left\| z(.,1,.)\right\|^2+\frac{1}{\tau}\left\| u_{x}\right\|^2-\frac{2\lambda}{\tau}\int_0^1e^{-2\lambda \rho}\left\| z(.,\rho,.)\right\|^2d\rho \\
&\leq &-\frac{e^{-2\lambda}}{\tau}\left\| z(.,1,.)\right\|^2+\frac{1}{\tau}\left\| u_{x}\right\|^2-\frac{2\lambda}{\tau}V_4(t).
\end{eqnarray*}
The derivative of $V_5$ is calculated as follows
\begin{eqnarray*}
V^\prime _5(t)&= &\frac{1}{\tau}\int_0^1e^{-\lambda \rho}f(\rho)\left\langle z_\rho(.,\rho,.),u_x\right\rangle d\rho - \int_0^1e^{-\lambda \rho}f(\rho)\left\langle z(.,\rho,.),u_{xt}\right\rangle d\rho \\
&= &\frac{1}{\tau}e^{-\lambda}f(1)\left\langle z(.,1,.),u_x\right\rangle-\frac{1}{\tau}f(0)\left\| u_{x}\right\|^2\\
 &+ &\frac{1}{\tau}\int_0^1(e^{-\lambda \rho}f(\rho))^\prime\left\langle z(.,\rho,.),u_{x}\right\rangle d\rho - \int_0^1e^{-\lambda \rho}f(\rho)\left\langle z(.,\rho,.),u_{xt}\right\rangle d\rho.
\end{eqnarray*}
Replacing $e^{-\lambda}f(1)$ by $\Psi,$ $f(0)$ by $\Lambda$ and $(e^{-\lambda \rho}f(\rho))^\prime$ by $-e^{-2\lambda \rho},$ we obtain (using Young's inequality),
\begin{eqnarray*}
V^\prime _5(t)&\leq & \frac{1}{\tau}\left(\frac{\Gamma}{2\varepsilon_2}-\Lambda \right)\left\| u_{x}\right\|^2+\frac{1}{2\tau}\left( \varepsilon_2+\frac{\tau}{\varepsilon_3}\right)V_4(t)+\frac{\varepsilon_3}{2}\Phi \left\| u_{tx}\right\|^2+\frac{\Psi}{\tau}\left\langle z(.,1,.),u_x\right\rangle . 
\end{eqnarray*}
Finally, the derivative of $V_6$ is
\begin{eqnarray*}
V^\prime _6(t)&= & \left\| u_{t}\right\|^2-\alpha \left\langle z(.,1,.),u_x\right\rangle-\beta \left\langle u_x,u_{xt}\right\rangle +\gamma \left\langle u,\theta_x\right\rangle.
\end{eqnarray*}
To conclude, it suffices to sum up $N_1V^\prime_1(t),$ $\alpha N_2V^\prime_2(t),$ $N_3V^\prime_3(t),$ $N_4V^\prime_4(t),$ $N_5V^\prime_5(t),$ and $N_6V^\prime_6(t).$
\end{proof}
In view of Lemma \ref{lem2}, for the assumption (A2) to be satisfied, it suffices that
\begin{eqnarray} 
   -N_4 \frac{e^{-2\lambda}}{\tau}+N_1\frac{\alpha \varepsilon_1}{2} &= &0, \label{n1} \\
n_1:=-2\frac{\lambda}{\tau}N_4+\frac{N_5}{2\tau}\left( \varepsilon_2+\frac{\tau}{\varepsilon_3}\right) &< &0, \label{n2}\\
 n_2:= \frac{N_4}{\tau}+\frac{N_5}{\tau}\left(\frac{\Gamma}{2\varepsilon_2}-\Lambda \right)+N_5\frac{\Psi\gamma \varepsilon_4 c_p}{2\alpha \tau}
  &< &0, \label{n3}\\
n_3:=N_1\left(\frac{\alpha}{2\varepsilon_1}-\beta \right) +N_5\left( \frac{\varepsilon_3}{2}\Phi +\frac{\Psi c_p}{\alpha \tau}\right) &< &0, \label{n4}\\
n_4:= -N_1\kappa+N_5\frac{\Psi\gamma }{2\alpha \tau\varepsilon_4}  &< &0.\label{n5} 
\end{eqnarray} 
The first condition (\ref{n1}) is equivalent to
\begin{equation*}
N_4=aN_1
\end{equation*}
with $a:=\frac{1}{2}\alpha\varepsilon_1\tau e^{2\lambda}.$ \label{n4_1}

The second condition (\ref{n2}) means that there exists $0<k<1$ such that
\begin{equation*}
N_5=bN_4=abN_1 \label{n5_1}
\end{equation*}
with $b:=\frac{4\lambda k}{\varepsilon_2+\frac{\tau}{\varepsilon_3}}.$

Note that, we have then
\begin{equation*}
N_6=\frac{\Psi}{\alpha \tau}N_5=\frac{ab\Psi}{\alpha \tau}N_1\;\;\;\text{and}\;\;\;N_2=\frac{\beta}{\alpha}N_6=\frac{ab\Psi\beta}{\alpha^2 \tau}N_1. \label{n6_1}
\end{equation*}

Replacing $N_5$ by $abN_1$ and $a$ by $\frac{1}{2}\alpha\varepsilon_1\tau e^{2\lambda}$ in (\ref{n4}), then multiplying the inequality by $\frac{\alpha}{\varepsilon_1}$, we obtain
\begin{equation}
\frac{1}{2}b\Psi\alpha e^{2\lambda}c_p+\frac{1}{4}\tau b \Phi \varepsilon_3 \alpha^2 e^{2\lambda}<\frac{\alpha}{\varepsilon_1}\left(\beta-\frac{\alpha}{2\varepsilon_1} \right). \label{n44} 
\end{equation}
We take $\varepsilon_1=\frac{\alpha}{\beta},$ then (\ref{n44}) turns into
\begin{equation}
b\Psi\alpha e^{2\lambda}c_p+\frac{1}{2}\tau b \Phi \varepsilon_3 \alpha^2 e^{2\lambda}<\beta^2. \label{eqfond2}
\end{equation}
Return back to (\ref{n5}), replacing $N_5$ by $ab
N_1$ to obtain
\begin{equation}
\frac{ab\Psi\gamma}{2\alpha \tau \varepsilon_4}<\kappa. \label{n5'}
\end{equation}
Also inequality (\ref{n3}) becomes
\begin{equation}
\frac{b\Psi\gamma \varepsilon_4 c_p}{2\alpha } <\left( \left( \Lambda-\frac{\Gamma}{2\varepsilon_2}\right)b-1 \right). \label{n55}
\end{equation}
Already, it is necessary that $\Lambda-\frac{\Gamma}{2\varepsilon_2}>0,$ that is $\Lambda>\frac{\Gamma}{2\varepsilon_2},$ and $\left( \Lambda-\frac{\Gamma}{2\varepsilon_2}\right)b-1>0,$ that is,
\begin{equation}
b=\frac{A}{\Lambda-\frac{\Gamma}{2\varepsilon_2}},\;\;\;A>1, \label{defb}
\end{equation}
hence, (\ref{n55}) turns into
\begin{equation}
\frac{b\Psi\gamma \varepsilon_4 c_p}{2\alpha } <\left( A-1 \right). \label{n33}
\end{equation}
Combining (\ref{n5'}) and (\ref{n33}) to obtain
\begin{equation}
\frac{ab\Psi\gamma}{2 \alpha \tau \kappa}<\varepsilon_4<\frac{2\alpha}{\gamma b\Psi c_p}(A-1)\label{eqfond1'}.
\end{equation}
Replacing $a$ by $\frac{1}{2\beta}\alpha^2\tau e^{2\lambda}$ in (\ref{eqfond1'}) to get
 \begin{equation}
\frac{\alpha \gamma b\Psi}{4\beta \kappa}e^{2\lambda}<\varepsilon_4<\frac{2\alpha}{\gamma b\Psi c_p}(A-1) \label{eqfond1}.
\end{equation}
Now, going back with more detail on assumption (\ref{defb}). To do this, replacing $b$ by $\frac{4\lambda k}{\varepsilon_2+\frac{\tau}{\varepsilon_3}},$ we obtain
\begin{equation}
\frac{A}{\Lambda-\frac{\Gamma}{2\varepsilon_2}}=\frac{4\lambda k}{\varepsilon_2+\frac{\tau}{\varepsilon_3}}, 
\end{equation}
or equivalently,
\begin{equation}
\frac{A}{4\lambda k}\varepsilon_2^2-\left(\Lambda-\frac{A}{4\lambda k}\frac{\tau}{\varepsilon_3} \right)\varepsilon_2+\frac{\Gamma}{2}=0 
\end{equation}
the discriminant of such equation in $\varepsilon_2$ is
\begin{equation}
\Delta:=\left(\Lambda-\frac{A}{4\lambda k}\frac{\tau}{\varepsilon_3} \right)^2-\frac{A\Gamma}{2\lambda k}
\end{equation}
which must be at least zero. In the sequel, we choose it zero. On the other hand $\varepsilon_2$ is positive, then
\begin{equation}
\Lambda-\frac{A}{4\lambda k}\frac{\tau}{\varepsilon_3}=\sqrt{\frac{A\Gamma}{2\lambda k}}
\end{equation}
or equivalently
\begin{equation}
\frac{A}{4\lambda k}\frac{\tau}{\varepsilon_3}=\Lambda-\sqrt{\frac{A\Gamma}{2\lambda k}}. \label{dis0}
\end{equation}
It is obvious that the left hand side of the last equation is positive, that is
\begin{equation}
\frac{A}{k}<2\lambda\frac{\Lambda^2}{\Gamma}. \label{dis}
\end{equation}
Moreover, since $A>1$ and $0<k<1$ we have
\begin{equation}
1<\frac{A}{k}<2\lambda\frac{\Lambda^2}{\Gamma}. \label{dis2}
\end{equation} 
Finally, note that
\begin{equation}
\varepsilon_2=\sqrt{2\lambda \Gamma \frac{k}{A}}. \label{ep3}
\end{equation}
\textit{Second part:} it concerns the equivalence between  $V(t)$ and $\tilde{V}(t).$  Let $\varepsilon_5>0$ and $\varepsilon_6>0,$ we have
$$\left| N_5V_5\right|\leq \frac{N_5}{2\varepsilon_5}V_4+\frac{N_5 \Phi \varepsilon_5}{2} \left\|u_{x}\right\|^2$$
and
$$\left|N_6V_6 \right|\leq \frac{N_6\varepsilon_6}{2}c_p\left\|u_{x}\right\|^2+\frac{N_6}{2\varepsilon_6} \left\|u_{t}\right\|^2.$$
For $V(t)$ to be equivalent to $\tilde{V} (t)$ it is sufficient that
\begin{eqnarray}
\frac{N_6}{2\varepsilon_6}<\frac{N_1}{2},\;\;\;\;\;\;\frac{N_5}{2\varepsilon_5}<N_4, \label{ep67}\\
\frac{N_6\varepsilon_6}{2}c_p+\frac{N_5 \Phi \varepsilon_5}{2}<\frac{N_2\alpha}{2}. \label{ep67'}
\end{eqnarray}
Using $N_6=\frac{ab\Psi}{\alpha \tau}N_5$ and $N_5=abN_1$ in(\ref{ep67}) we get
\begin{equation*}
\varepsilon_6>\frac{ab\Psi}{\tau \alpha},\;\;\;\;\;\;\varepsilon_5>\frac{b}{2}.
\end{equation*}
We choose
\begin{equation*}
\varepsilon_6=2\frac{ab\Psi}{\tau \alpha},\;\;\;\;\;\;\varepsilon_5=b.
\end{equation*}
Using again $N_6=\frac{\Psi}{\alpha \tau}N_5$ and $N_2=\frac{\beta \Psi}{\alpha^2 \tau}N_5,$ inequality (\ref{ep67'}) becomes
\begin{equation}
\frac{b\Psi^2}{\beta \tau}c_p+ \Phi b<\frac{\beta \Psi}{\alpha \tau}. \label{eqfond3}
\end{equation}
\textit{Third part:} It is enough to examine the equations (\ref{dis2}), (\ref{eqfond2}),  (\ref{eqfond1}) and (\ref{eqfond3}).

We take $h:=\frac{\Psi}{\Gamma},$ then $\Lambda=\Psi+\Gamma=(1+h)\Gamma=(1+h)\frac{1-e^{-2\lambda}}{2\lambda}$.

\textit{First step.} We begin by assumption (\ref{dis2}) which can be translated into
\begin{equation}
1<\frac{A}{k}<(1-e^{-2\lambda})(1+h)^2 \label{eqfond0}
\end{equation}
We choose  $h:=e^{-2\lambda}.$ Then 
\begin{equation*}
(1-e^{-2\lambda})(1+h)^2=(1-h)(1+h)^2=1+h-h^2-h^3>1
\end{equation*}
for $\lambda$ large enough. We choose $A=1+h-h^2-2h^3-4h^4$ and $k=1-h^4.$ We have, for $\lambda$ large enough, $A>1$, $0<k<1$ and (\ref{eqfond0}) is satisfied since
\begin{equation}
1<\frac{A}{k}=1+h-h^2-2h^3+o(h^3)<(1-h)(1+h)^2
\end{equation}
\textit{Second step.} Estimate of $\varepsilon_2,\;b$ and $\varepsilon_3$ according to $h$ and $\lambda$ for $\lambda$ large enough. 
We have
\begin{eqnarray}
\frac{k}{A}=\frac{1+o(h^2)}{1+h-h^2+o(h^2)} \notag\\
=1-h+2h^2+o(h^2)
\end{eqnarray}
and 
\begin{equation*}
\Gamma =\frac{1}{2\lambda}(1-h)
\end{equation*}
then
\begin{eqnarray*}
2\lambda\Gamma\frac{k}{A}&=&(1-h)(1-h+2h^2+o(h^2))\\
&=&1-2h+3h^2+o(h^2).
\end{eqnarray*}
Hence we obtain, using (\ref{ep3}),
\begin{eqnarray*}
\varepsilon_2 &=& 1-h+\frac{3}{2}h^2-\frac{1}{2}h^2+o(h^2)\\
&=&1-h+h^2+o(h^2).
\end{eqnarray*}
We evaluate $b$. First,
\begin{eqnarray*}
\frac{1}{2\varepsilon_2} &=&\frac{1}{2(1-h+h^2+o(h^2))}\\
&=&\frac{1}{2}\left( 1+h-h^2+(h-h^2)^2+o(h^2)\right) \\
&=&\frac{1}{2}\left( 1+h+o(h^2)\right), 
\end{eqnarray*}
then
\begin{eqnarray*}
1+h-\frac{1}{2\varepsilon_2}=\frac{1}{2} (1+h+o(h^2)),
\end{eqnarray*}
hence
\begin{eqnarray*}
\frac{1}{1+h-\frac{1}{2\varepsilon_2}}&=&2 (1-h+h^2+o(h^2)).
\end{eqnarray*}
Finally, from(\ref{defb}) and using that $\Lambda=(1+h)\Gamma$ we have
\begin{eqnarray}
b=\frac{ A}{\Gamma(1+h-\frac{1}{2\varepsilon_2})}&=&4\lambda\frac{(1+h-h^2+o(h^2))(1-h+h^2+o(h^2))}{1-h} \notag\\
&=&4\lambda (1+h+o(h^2)).
\end{eqnarray}
Now we evaluate $\varepsilon_3:$

First, recall that
\begin{equation}
\frac{A}{2\lambda k}=\frac{1}{2\lambda}(1+h-h^2-2h^3-3h^4+o(h^4)), \label{ep4}
\end{equation}
 then $$\frac{A\Gamma}{2\lambda k}=\frac{1}{4\lambda^2}(1-2h^2-h^3-h^4+o(h^4))$$ and 
\begin{equation}
\sqrt{\frac{A\Gamma}{2\lambda k}}=\frac{1}{2\lambda}(1-h^2-\frac{1}{2} h^3-h^4+o(h^4)) \label{ep4'}
\end{equation}
Using (\ref{dis0}), (\ref{ep4}), (\ref{ep4'}) and that $\Lambda =\frac{1}{2\lambda}(1-h^2)$, we have
\begin{equation}
\varepsilon_3=\frac{\tau}{h^3}(1-h+o(h)).
\end{equation} 
\textit{Third step.} Interpretation of
Inequality (\ref{eqfond2}). First, we need to express $\Phi$ according to $h$ and $\lambda.$ 

Since
\begin{eqnarray*}
f(\rho)&=&e^{\lambda \rho}\left(h\Gamma+\int_\rho^1 e^{-2\lambda s}ds\right)\\
&=& \frac{1}{2\lambda}e^{\lambda\rho}\left(e^{-2\lambda}(1-e^{-2\lambda})+(e^{-2\lambda\rho}-e^{-2\lambda}) \right)\\
&=& \frac{1}{2\lambda}e^{\lambda\rho}\left(e^{-2\lambda \rho}-e^{-4\lambda} \right)
\end{eqnarray*}
then,
\begin{eqnarray*}
f^2(\rho)&=&\frac{1}{4\lambda^2}e^{2\lambda \rho}\left(e^{-4\lambda\rho }-2e^{-4\lambda}e^{-2\lambda \rho}+e^{-8\lambda} \right)\\
&=&\frac{1}{4\lambda^2}\left(e^{-2\lambda\rho }-2e^{-4\lambda}+e^{-8\lambda}e^{2\lambda\rho } \right).
\end{eqnarray*}
Hence,
\begin{eqnarray*}
\Phi &=&\int_0^1 f^2(\rho )d\rho \\
 &=&\frac{1}{4\lambda^2}\left( \frac{1-e^{-2\lambda}}{2\lambda}-2e^{-4\lambda}+\frac{1}{2\lambda}\left( e^{-6\lambda}-e^{-8\lambda}\right)  \right)\\ &=& \frac{1}{8 \lambda^3}(1-h-4\lambda h^2+o(h^2)). 
\end{eqnarray*}
Now, inequality (\ref{eqfond2}) can be rewritten as:
\begin{equation*}
 2\alpha c_p(1-h^2+o(h^2))+\frac{\tau^2\alpha^2}{4\lambda^2h^4}(1-h+o(h))<\beta^2. 
\end{equation*}
Then, we take 
\begin{equation}
2 \left( \frac{c_p}{\alpha \tau^2}+\frac{e^{8\lambda}}{8\lambda^2}\right)<\left( \frac{\beta}{\alpha \tau}\right)^2  \label{cond1}  
\end{equation}
with $\lambda$ large enough.

\textit{Fourth step.} Condition (\ref{eqfond1}) and existence of $\varepsilon_4.$
Inequality (\ref{eqfond1}) can be rewritten as:
\begin{equation*}
\frac{\alpha \gamma}{2\beta \kappa}(1-h^2+o(h^2))<\varepsilon_4<\frac{\alpha}{\gamma c_p}(1-h+o(h)).
\end{equation*}
Then, we take
\begin{equation}
\frac{\gamma^2}{2\kappa}c_p<\beta (1-h+o(h))  \label{cond2} 
\end{equation}
and $\varepsilon_4$ can be taken equal to $\frac{\alpha}{2}\left( \frac{\gamma }{2\beta \kappa}+\frac{1}{\gamma c_p}(1-h+o(h))\right),$ with $\lambda$ large enough. 

\textit{Fifth step.} Interpretation of assumption (\ref{eqfond3}). It can be rewritten as:
\begin{equation*}
2h\frac{c_p}{\tau \beta}(1-h^2+o(h^2))+\frac{1}{\lambda h}(1+h+o(h))<\frac{\beta}{\tau \alpha}. 
\end{equation*}
It suffices to take
\begin{equation}
\left(2e^{-2\lambda}\frac{c_p}{\tau \beta}+\frac{1}{\lambda}(e^{2\lambda}+1+o(1)) \right)<\frac{\beta}{\alpha \tau }. \label{cond3} 
\end{equation}
with $\lambda$ large enough.

Note that for $\lambda$ large enough, $\beta =\alpha \tau e^{4\lambda}$ satisfies the three conditions (\ref{cond1}), (\ref{cond2}) and (\ref{cond3}). Moreover, there exists $\beta_0>0$ such that every $\beta>\beta_0$ satisfies the three conditions (\ref{cond1}), (\ref{cond2}) and (\ref{cond3}).

For every $\beta>\beta_0$ we have
\begin{equation}
\dot{V}(t)\leq -n_0 \tilde{V}(t),
\end{equation} 
where $n_0=min\lbrace n_1,n_2,\frac{1}{c_p} n_3,\frac{1}{c_p}n_4 \rbrace.$
Recall that $V(t)$, $\tilde{V}(t)$ and $E(t)$ are equivalent then, there exists $a_0>0, C>0$ such that
$$E(t)<Ce^{-a_0t}.$$
\end{proof}

\section*{Comments}
 We can replace the Neumann conditions for $\theta$
$$\theta_x(0,t)=\theta_x(\ell,t)=0$$ by the Dirichlet conditions
 $$\theta(0,t)=\theta(\ell,t)=0,$$ we then obtain the same results.

\bibliographystyle{amsplain}
\bibliography{mabib_2art3}

\end{document}